\title[Disentangling Number]{The Disentangling Number for Phylogenetic Mixtures}
\author{Seth Sullivant}
\address{Department of Mathematics \\
North Carolina State University, Raleigh, NC, 27695}
\email{smsulli2@ncsu.edu}
\date{}
\theoremstyle{plain}
\newtheorem{thm}{Theorem}
\newtheorem{prop}[thm]{Proposition}
\theoremstyle{definition}
\newtheorem{defn}[thm]{Definition}
\theoremstyle{remark}
\newtheorem*{rmk}{Remark}
\newcommand{\zz}{\mathbb{Z}}
\newcommand{\nn}{\mathbb{N}}
\newcommand{\rr}{\mathbb{R}}
\newcommand{\calr}{\mathcal{R}}
\newcommand{\calt}{\mathcal{T}}
\newcommand{\inD}[1][\relax]{\def\argone{#1}\def\temprelax{\relax}
  \ifx\argone\temprelax\right.\else\,\middle|#1\right.{}\fi}
\begin{document}

\begin{abstract}
We provide a logarithmic upper bound for the disentangling number on unordered lists of leaf labeled trees.  This results is useful for analyzing phylogenetic mixture models.  The proof depends on interpreting multisets of trees as high dimensional contingency tables.
\end{abstract}

\maketitle


For a set $X$ of leaf labels let
 $\calt_{X}$ be the set of trivalent leaf labeled trees  (see \cite{Semple2003} for background on leaf labeled trees).  Typically, the labels come from the set $[n] = \{1,2, \ldots, n\}$.  For $K \subseteq X$ and $T \in \calt_{X}$, let $T|_{K}$ denote the trivalent tree $T|_{K} \in \calt_{K}$ obtained by restricting $T$ to the label set of leaves $K$, contracting vertices of degree two as necessary to obtain trivalent tree.  
Let $\calt_{X,r}$ be the set of unordered lists of length $r$ of elements of  $\calt_{X}$.  Note that these elements need not be distinct.  For $S \in \calt_{X,r}$, with $S = (T_{1}, \ldots, T_{r})$, let $S|_{K} = (T_{1}|_{K}, \ldots, T_{r}|_{K})$.

\begin{figure}[h]
\includegraphics{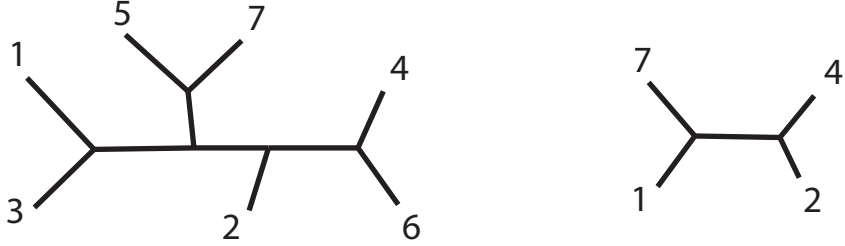} \caption{The right-hand tree is $T|_{\{1,2,4,7\}}$ for the tree $T$ on the left.}
\end{figure}

\begin{defn}
Let $S_{1}, S_{2} \in \calt_{X,r}$ with $S_{1} \neq S_{2}$.  A subset $K \subseteq X$ is said to disentangle $S_{1}$ and $S_{2}$ if $S_{1}|_{K} \neq S_{2}|_{K}$.  Let $d(S_{1}, S_{2})$ be the cardinality of the minimum disentangling set of $S_{1}$ and $S_{2}$.  The disentangling number $D(r)$ is
$$
D(r) = \max_{n \in \nn }   \max_{S_{1} \neq S_{2} \in \calt_{[n],r}} d(S_{1}, S_{2}).
$$
\end{defn}

Humphries \cite{Humphries2008} proved that the disentangling number exists, and gave the bounds
$$
3 ( \lfloor \log_{2} r \rfloor + 1)  \leq D(r)  \leq 3r.
$$
At present, the only exactly known values of the disentangling number are $D(1) = 4$ and $D(2) = 6$ \cite{Matsen2008}.  This first value $D(1) = 4$, is usually stated as saying that ``the quartets determine the tree'' (see e.g.~\cite{Semple2003}).  

The main motivation for studying the disentangling number is that it can be used as a tool in proofs of the identifiability of the tree parameters in phylogenetic mixture models.  Indeed, if it can be shown, for some value $s \geq D(r)$, that the tree parameters of $r$ class mixtures on $s$-leaf phylogenetic trees are (generically) identifiable under some phylogenetic model, then the tree parameters are (generically) identifiable for $r$ class mixtures all trees with $t \geq s$ leaves.  For example, the known value of $D(2) = 6$ was used in the proof of generic identifiability of the tree parameters of $2$-tree Jukes-Cantor and Kimura $2$-parameter mixture models \cite{Allman2010}.

We provide the following improved upper bound on the disentangling number, which is within one of the optimal possible value.

\begin{thm} \label{thm:main}
$D(r) \leq 3 ( \lfloor \log_{2}(r) \rfloor + 1) + 1 $
\end{thm}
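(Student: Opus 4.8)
The plan is to reduce disentangling to a statement about \emph{quartets through a single fixed leaf}, and then to prove a doubling estimate by reading the restricted quartets of a list as a contingency table whose ``mass'' is halved at each step. First I would fix once and for all a hub leaf $x_{0}\in X$ and restrict attention throughout to quartets (4-element subsets) $Q$ with $x_{0}\in Q$. The point is the classical rooted-triple reformulation of ``quartets determine the tree'' (see \cite{Semple2003}): rooting $T\in\calt_{X}$ at $x_{0}$, the topology of $T|_{\{x_{0},a,b,c\}}$ encodes exactly the rooted triple of $T$ on $\{a,b,c\}$, and rooted triples determine the rooted tree. Hence two distinct trees already differ on some quartet containing $x_{0}$. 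Because every quartet I use shares $x_{0}$, the union $K$ of any $\ell$ of them has at most $1+3\ell$ leaves, and since $T|_{K}$ determines all types $T|_{Q}$ with $Q\subseteq K$, it suffices to make the \emph{multiset of joint quartet-type profiles} of $S_{1}$ and $S_{2}$ differ. This is exactly where the coefficient $3$ and the additive $+1$ in the bound come from, and it disposes of what I first expected to be the main obstacle, namely leaf economy: naively chosen separating quartets could be disjoint and cost $4$ leaves each, whereas routing every quartet through the common hub $x_{0}$ forces the needed overlap automatically.

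Next I would set up the contingency-table bookkeeping. Write $\mu$ for the integer multiplicity-difference function on $\calt_{X}$ induced by $S_{1}$ and $S_{2}$, so that $\sum_{T}\mu(T)=0$, $\mu\neq 0$, and the positive mass $P=\sum_{\mu(T)>0}\mu(T)$ satisfies $1\le P\le r$. For a quartet $Q$, grouping trees by the value $T|_{Q}\in\{1,2,3\}$ partitions $\calt_{X}$ into three classes, and $Q$ disentangles $S_{1},S_{2}$ precisely when one of these classes is count-unbalanced, i.e.\ has nonzero net $\mu$-mass. The multiset of profiles over several such quartets is then a nonnegative integer table on $\{1,2,3\}^{\ell}$, and in the ``hard'' case where no single quartet disentangles, the tables of $S_{1}$ and $S_{2}$ share all one-dimensional margins. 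The central claim (the doubling lemma) is that $S_{1}$ and $S_{2}$ can be disentangled using at most $\lfloor\log_{2}P\rfloor+1$ quartets through $x_{0}$.

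I would prove the doubling lemma by induction on $P$. Choose two distinct trees $T^{+},T^{-}$ in the support of $\mu$; by the hub fact they differ on some quartet $Q_{0}\ni x_{0}$. If $Q_{0}$ already disentangles, one quartet suffices. Otherwise all three classes of $Q_{0}$ are count-balanced, yet $T^{+}$ and $T^{-}$ lie in different classes, so at least two classes are genuinely multiset-different, and each such class carries positive mass at least $1$. Since the three positive masses sum to $P$, at most one class can exceed $P/2$; hence some multiset-different class $C$ has positive mass $P_{C}\le P/2$. Recursing on the sublists of $S_{1}$ and $S_{2}$ falling into $C$ — still equal-length lists of full trees on $X$ sharing the hub $x_{0}$ — costs one quartet plus, by induction, at most $\lfloor\log_{2}(P/2)\rfloor+1=\lfloor\log_{2}P\rfloor$ further quartets through $x_{0}$, for a total of $\lfloor\log_{2}P\rfloor+1$.

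Finally I would assemble the pieces: with $\ell\le\lfloor\log_{2}P\rfloor+1\le\lfloor\log_{2}r\rfloor+1$ quartets, all containing $x_{0}$, let $K$ be their union. The recursion exhibits a joint quartet-type profile on which the two profile-multisets disagree, so $S_{1}|_{K}\neq S_{2}|_{K}$, while $|K|\le 1+3\ell\le 3(\lfloor\log_{2}r\rfloor+1)+1$, giving the theorem. The step I would watch most carefully is the hub fact transported into the multiset setting: I must be sure that \emph{at every level} of the recursion the current sublists still contain two distinct trees (true, since multiset-different lists have nonzero $\mu$), so that a useful quartet through the \emph{same} fixed $x_{0}$ always exists and the halving never stalls; verifying that the positive mass genuinely drops to a multiset-different class of small mass, rather than merely to a count-balanced one, is the crux of the inductive step.
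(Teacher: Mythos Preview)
Your argument is correct. The reduction via a fixed hub leaf $x_{0}$ is exactly the paper's reduction $D(r)\le RD(r)+1$ (rooting at $x_{0}$ turns quartets through $x_{0}$ into rooted triples), so up to that point the two proofs coincide. The key step, however, is genuinely different. The paper encodes each multiset $S$ as a nonnegative integer point $u_{S}$ in the tensor space $\bigotimes_{\{i,j,k\}}\rr^{3}$, observes that agreement on all $K$ with $\#K\le g(r)$ forces $u_{S_{1}}-u_{S_{2}}$ into the kernel of a marginal map $\pi_{\Gamma_{r}}$, and then invokes Kahle's theorem on neighborliness of marginal polytopes to conclude $\|u_{S_{1}}-u_{S_{2}}\|_{1}\ge 2^{\lfloor\log_{2}r\rfloor+2}>2r$, a contradiction. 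You instead prove the needed estimate from scratch by the inductive halving: a separating quartet through $x_{0}$ either disentangles immediately or splits the trees into count-balanced classes, and since $T^{+}$ and $T^{-}$ land in distinct classes, two of those classes are multiset-different with positive mass $\ge 1$, so one of them has positive mass $\le P/2$ and the recursion continues. Your route is more elementary and self-contained (it is, in effect, a direct proof of the special case of Kahle's bound the paper cites), and it is mildly constructive in that it actually builds the disentangling set; the paper's route is cleaner to state once the marginal-polytope machinery is in hand and, as the paper remarks, extends verbatim to non-binary trees by enlarging each tensor factor.
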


To prove Theorem \ref{thm:main} we first reduce to rooted binary trees, as follows.  Let $\calr\calt_{X}$ denote the set of leaf labeled rooted binary trees on leaf label set $X$.  For $T \in \calr\calt_{X}$ and $K \subseteq X$, let $T|_{K}$ be the induced binary rooted tree on leaf label set $K$, with edges contracted as appropriate to obtain a rooted binary tree.  Let $\calr\calt_{X,r}$ be the set of unordered lists of length $r$ of elements of  $\calr\calt_{X}$.  Note that these elements need not be distinct.  For $S \in \calr\calt_{X,r}$, with $S = (T_{1}, \ldots, T_{r})$, let $S|_{K} = (T_{1}|_{K}, \ldots, T_{r}|_{K})$.  Define the rooted disentangling number $RD(r)$ in an analogous way to the disentangling number.

\begin{prop}
The  disentangling and rooted disentangling numbers satisfy:  $D(r) \leq RD(r) + 1.$
\end{prop}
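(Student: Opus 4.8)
The plan is to exploit the standard \emph{outgroup} correspondence that turns an unrooted trivalent tree into a rooted binary tree by designating a single leaf as a root marker; the cost of designating that marker is exactly the ``$+1$'' in the bound. Concretely, fix any leaf $\rho \in X$ and suppress it: delete the leaf $\rho$ together with its pendant edge, and declare the resulting degree-two vertex to be the root. This defines a map $\phi_{\rho} \colon \calt_{X} \to \calr\calt_{X \setminus \{\rho\}}$. Attaching a fresh pendant leaf $\rho$ at the root of a rooted binary tree inverts this operation, so $\phi_{\rho}$ is a bijection. In particular $\phi_{\rho}$ is injective, and applying it coordinatewise to an unordered list gives an injection $\calt_{X,r} \to \calr\calt_{X \setminus \{\rho\},r}$ (a bijection between the underlying sets of trees induces a bijection between multisets over them).

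The key compatibility I would establish is that restriction commutes with this rooting whenever the marker is retained: for every $K \subseteq X$ with $\rho \in K$ and every $T \in \calt_{X}$,
$$
\phi_{\rho}(T)\big|_{K \setminus \{\rho\}} \;=\; \phi_{\rho}\big(T|_{K}\big).
$$
Both sides should be the rooted binary tree on leaf set $K \setminus \{\rho\}$ obtained by remembering only the leaves in $K$ and using $\rho$ to orient toward the root, so the identity follows directly from the definitions of the two restriction operations once the degree-two-vertex suppressions are checked to be performed consistently near the root.

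With these two facts the proposition is immediate. Given $S_{1} \neq S_{2}$ in $\calt_{[n],r}$, pick any $\rho \in [n]$ and set $\hat{S}_{1} = \phi_{\rho}(S_{1})$ and $\hat{S}_{2} = \phi_{\rho}(S_{2})$ in $\calr\calt_{[n]\setminus\{\rho\},r}$; these are distinct by injectivity of $\phi_{\rho}$ on lists. By the definition of $RD(r)$ there is a set $\hat{K} \subseteq [n] \setminus \{\rho\}$ with $|\hat{K}| \leq RD(r)$ and $\hat{S}_{1}|_{\hat{K}} \neq \hat{S}_{2}|_{\hat{K}}$. Put $K = \hat{K} \cup \{\rho\}$, so that $|K| \leq RD(r) + 1$. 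Applying $\phi_{\rho}$ coordinatewise to $S_{1}|_{K}$ and $S_{2}|_{K}$ and invoking the commutation identity gives $\phi_{\rho}(S_{i}|_{K}) = \hat{S}_{i}|_{\hat{K}}$ for $i = 1,2$; since these images are unequal and $\phi_{\rho}$ is injective on lists, we conclude $S_{1}|_{K} \neq S_{2}|_{K}$. Thus $K$ disentangles $S_{1}$ and $S_{2}$, so $d(S_{1},S_{2}) \leq RD(r) + 1$, and taking the maximum over all $n$ and all $S_{1} \neq S_{2}$ yields $D(r) \leq RD(r) + 1$.

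The only genuine obstacle is the commutation identity; everything else is formal once it is in place. The care needed is entirely in the bookkeeping of suppressed degree-two vertices, in particular verifying that the vertex chosen as the root of $\phi_{\rho}(T|_{K})$ is the image of the root of $\phi_{\rho}(T)$ after restriction, i.e.\ that restricting a rooted tree tracks the root in the way the outgroup picture dictates. I would handle this by adopting the outgroup description as the working definition of rooted restriction, which makes the identity transparent; if one instead begins from an intrinsic definition of rooted-tree restriction, then one must check that the two definitions agree, which is routine but is where all the casework lives.
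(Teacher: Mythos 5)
Your proof is correct and is essentially the same argument as the paper's: both root the unrooted trees at a distinguished leaf (your $\phi_{\rho}$ is exactly the paper's choice of leaf $0$ as root), apply the definition of $RD(r)$ to the resulting rooted lists, and then re-adjoin the root leaf to obtain a disentangling set of size $RD(r)+1$. The only difference is that you make explicit the injectivity of the rooting map and the commutation of rooting with restriction, details the paper leaves implicit.
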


\begin{proof}
Let $n \geq RD(r)$, and consider a set $X$ of cardinality $n +1$, e.g. $X = \{0\} \cup [n]$.   Let $S_{1}, S_{2} \in \calt_{X,r}$.  Choosing the node $0$ (or any other leaf) as a root node, we arrive at sets $\tilde{S}_{1}, \tilde{S}_{2} \in \calr\calt_{[n],r}$.  By definition of the rooted disentangling number, there is a $K \subseteq [n]$ of $RD(r)$ elements such $\tilde{S}_{1}|_{K} \neq \tilde{S}_{2}|_{K}$.   This implies that the set $\{0\} \cup K$ satisfies $S_{1}|_{ \{0\} \cup K}  \neq S_{2}|_{\{0\} \cup K}.$ 
\end{proof}

Theorem \ref{thm:main} then follows as a corollary of the following result.

\begin{thm}\label{thm:submain}
$RD(r)  = 3 ( \lfloor \log_{2}(r) \rfloor + 1).$
\end{thm}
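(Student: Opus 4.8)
The plan is to prove the two inequalities separately, with the upper bound carrying the ``contingency table'' idea promised in the abstract. For the upper bound $RD(r) \le 3(\lfloor \log_2 r\rfloor + 1)$, I would encode each rooted binary tree $T \in \calr\calt_X$ by the tuple of its induced rooted triples: the coordinates are indexed by the $3$-subsets $\{a,b,c\} \subseteq X$, and the entry in coordinate $\{a,b,c\}$ records the induced tree $T|_{\{a,b,c\}}$, one of three possibilities. Since rooted triples determine a rooted binary tree (the rooted analogue of ``quartets determine the tree'', underlying the value $RD(1)=3$), this encoding is injective, so an unordered list $S \in \calr\calt_{X,r}$ becomes a multiset of $r$ points in a finite product set $\prod_{\{a,b,c\}} \Sigma_{\{a,b,c\}}$ --- a high-dimensional contingency table recording how many of the $r$ trees realize each combination of triples. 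The crucial observation is that restricting $S$ to a leaf set $K$ determines the marginal of this table onto the coordinates $\{a,b,c\} \subseteq K$: from the multiset $S|_K$ one reads off, for each triple contained in $K$, its induced topology in each tree.

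The combinatorial heart is then a reconstruction lemma for such tables: if $A \neq B$ are multisets of size $r$ in a finite product set, then some coordinate subset $J$ with $|J| \le \lfloor \log_2 r\rfloor + 1$ already separates the projections, $\pi_J(A) \neq \pi_J(B)$. I would prove this by passing to the multiset difference --- disjoint multisets $A', B'$ of common size $m \le r$ --- and inducting on $m$ via a halving argument: choose a coordinate on which $A' \cup B'$ is non-constant; if the value-counts already differ we finish with one coordinate, and otherwise there are at least two nonempty value-classes, so the smallest has size at most $m/2$, and we recurse inside it, spending a single extra coordinate. This yields $|J| \le \lfloor \log_2 m\rfloor + 1$. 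Applying the lemma to the encoded lists and taking $K = \bigcup_{\{a,b,c\} \in J} \{a,b,c\}$ produces a disentangling set with $|K| \le 3|J| \le 3(\lfloor \log_2 r\rfloor + 1)$; here the factor $3$ is exactly the ``three leaves per triple'' and the $\log_2$ is the halving.

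For the matching lower bound I would realize the sharpness of the reconstruction lemma inside trees. Set $k = \lfloor \log_2 r\rfloor + 1$. The parity function on $\{0,1\}^k$ gives two multisets $U \neq V$ of binary vectors whose marginals agree on every coordinate subset of size $\le k-1$ but differ on the full set; pad both with common vectors to reach size $r$. I would encode a bit $x_t$ by a three-leaf ``gadget'' --- a choice between two rooted triples on a dedicated triple of leaves --- and glue $k$ gadgets on disjoint leaf triples onto a fixed backbone, giving an injective map $x \mapsto T_x$; set $S_1 = \{T_x : x \in U\}$ and $S_2 = \{T_x : x \in V\}$. The point to verify is that a leaf set $K$ with $|K| < 3k$ fully contains at most $k-1$ of the disjoint gadgets, and that a partially seen gadget leaks no bit (restricting a three-leaf gadget subtree to one or two of its leaves is independent of its configuration). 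Hence $T_x|_K$ depends on $x$ only through its projection onto the fully contained gadgets $J$, with $|J| \le k-1$, so $S_1|_K$ and $S_2|_K$ are governed by $\pi_J(U) = \pi_J(V)$ and coincide; thus no such $K$ disentangles, forcing $RD(r) \ge 3k$.

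I expect the two delicate points to be, first, the sharp constant in the reconstruction lemma --- getting $\lfloor \log_2 r \rfloor + 1$ rather than $\lceil \log_2 r\rceil + O(1)$ requires recursing into the \emph{smallest} nonempty value-class and checking the floor arithmetic --- and, second, the no-leakage claim for partial gadgets, which is what guarantees that each of the $k$ bits genuinely costs three leaves and thereby converts the tight contingency-table example into a tight tree example. Combining the two bounds proves Theorem~\ref{thm:submain}, and together with the Proposition this recovers Theorem~\ref{thm:main}.
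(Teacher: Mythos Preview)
Your proposal is correct and tracks the paper's proof closely. The lower bound is exactly Humphries' parity-gadget construction reproduced in the paper: $k=\lfloor\log_2 r\rfloor+1$ disjoint three-leaf gadgets glued onto a fixed backbone, even/odd parity classes, padding to size $r$; your ``no-leakage'' check is the same observation the paper makes, that deleting any one of $a_i,b_i,c_i$ collapses both gadget configurations to the same cherry.

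For the upper bound you use the same rooted-triple contingency-table encoding $T\mapsto e_T$; the one substantive difference is that where the paper invokes Kahle's theorem (a nonzero integer vector in $\ker_{\zz}\pi_\Gamma$ has $\ell_1$-norm at least $2^s$, with $s$ the minimal non-face size) to force $\|u_{S_1}-u_{S_2}\|_1\ge 2^{\lfloor\log_2 r\rfloor+2}>2r$ and obtain a contradiction, you instead give a direct halving argument producing a coordinate set $J$ of size at most $\lfloor\log_2 m\rfloor+1$ that separates the two multisets. Your recursion is in effect an elementary proof of the special case of Kahle's result needed here (the uniform skeleton), so the approaches are equivalent in content: yours is self-contained at the cost of a short induction, the paper's is terser given the citation.
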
 

To prove  the inequality $RD(r)  \leq 3 ( \lfloor \log_{2}(r) \rfloor + 1)$ of Theorem \ref{thm:submain}, we use the known value $RD(1) = 3$ (i.e.~rooted triples determined a rooted tree  (see e.g.~\cite{Semple2003})) to encode multisets of trees as high dimensional contingency tables.  To this end, consider the $3^{\#X \choose 3}$-dimensional space 
$$Q_{X}:= \rr^{3^{\#X \choose 3}} =  \bigotimes_{\{i,j,k\} \in {X \choose 3} }\rr \langle e_{i|jk}, e_{j|ik}, e_{k|ij} \rangle .$$  
Coordinates on this space are indexed by the lists of ${X \choose 3}$-rooted triplets.  Each rooted  $X$-leaf trivalent tree $T$ gives rise to a uniquely determined standard unit vector 
$$e_{T} :=  \otimes_{\{i,j,k\} \in {X \choose 3} } e_{T|_{\{i,j,k\}}} \in  Q_{X}.$$  
This uniqueness is a consequence of the fact that rooted triples in a rooted tree uniquely determine the tree.   

An unordered list of trees $S = (T_{1}, \ldots, T_{r}) \in \calt_{X,r}$ gives rise to a nonnegative integer array $$u_{S} = e_{T_{1}} + e_{T_{2}} + \cdots + e_{T_{r}}$$
 in $Q_{X}$, whose $1$-norm is equal to $r$.  Furthermore, the list $S$ can be recovered from the vector $u_{S}$.  We will use this encoding of sets of trees to prove Theorem \ref{thm:submain}.
 
Let $K$ be a finite set.  For each $k \in K$, let $d_{k} \in \nn_{>1}$ and consider the space
$$
\rr^{d_{K}}  =  \bigotimes_{k \in K} \rr^{d_{k}},
$$ 
with the standard unit vectors $\otimes_{k \in K}  e_{j_{k}} $.  For each $L \subseteq K$, we get a linear map
$$
\pi_{L} : \rr^{d_{K}} \rightarrow \rr^{d_{L}},   \otimes_{k \in K}  e_{j_{k}}  \mapsto \otimes_{k \in L}  e_{j_{k}}.
$$
Given $u \in  \rr^{d_{K}} $, $\pi_{L}(u)$ is called the $L$-marginal of $u$.  If $\Gamma = \{L_{1}, \ldots, L_{s}\}$ is a set of subsets of $K$, we get an induced linear map
$$
\pi_{\Gamma}: \rr^{d_{K}} \rightarrow  \bigoplus_{i =1}^{s} \rr^{d_{L_{i}}},  \quad u \mapsto \pi_{L_{1}}(u) \oplus \cdots \oplus \pi_{L_{s}}(u). 
$$
which is the linear transformation that computes the $\Gamma$-marginals of $u$.  Suppose now that $\Gamma$ is closed downward, that is $L \in \Gamma$ and $L' \subseteq L$ implies that $L' \in \Gamma$.  Such a $\Gamma$ is called a simplicial complex.  The elements of $\Gamma$ are called the faces of $\Gamma$.  

\begin{thm}\cite{Kahle2010} \label{thm:kahle}
Let $\Gamma$ be a simplicial complex, let $s$ be the cardinality of the smallest $S \subset K$ not in $\Gamma$, and $u \in \ker_{\zz} \pi_{\Gamma}$ with $u \neq 0$.  Then $\|u\|_{1} \geq 2^{s}$.
\end{thm}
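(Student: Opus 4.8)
The plan is to prove the bound by induction on $|K|$, invoking the hypothesis only through the consequence that $\pi_{L}(u) = 0$ for every $L \subseteq K$ with $|L| \le s-1$. This is legitimate because $\Gamma$ is downward closed with smallest non-face of cardinality $s$, so every set of size at most $s-1$ is a face; the extra faces of $\Gamma$ only add constraints and hence can be ignored for a lower bound. One may also assume $s \le |K|$, since otherwise $\Gamma = 2^{K}$, the map $\pi_{\Gamma}$ includes the identity $\pi_{K}$, and $\ker_{\zz}\pi_{\Gamma} = 0$. I would first dispose of $s = 1$: here the only constraint is that the total sum of the array vanishes, so separating $u$ into its positive and negative parts gives $\|u\|_{1} \ge 2 = 2^{1}$. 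This case also anchors the induction, as $s \le |K|$ forces $s = 1$ when $|K| = 1$.

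For $s \ge 2$ I would fix a coordinate $k \in K$ and slice $u$ along it, writing $u = \sum_{a} u^{(a)} \otimes e_{a}$, where each slice $u^{(a)}$ is an integer array on $K' = K \setminus \{k\}$. The crucial bookkeeping step is to track how the marginal conditions descend to the slices: for $L' \subseteq K'$ with $|L'| \le s-2$, the array $\pi_{L' \cup \{k\}}(u)$ has $(y,a)$-entry equal to $\pi_{L'}(u^{(a)})_{y}$, so its vanishing is exactly the vanishing of $\pi_{L'}(u^{(a)})$ for every $a$. Hence each slice satisfies all marginal constraints of size at most $s-2$. Since distinct values of $k$ index disjoint entries, $\|u\|_{1} = \sum_{a}\|u^{(a)}\|_{1}$, which is what makes a slicewise estimate add up.

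I would then split on the number of nonzero slices. If at least two are nonzero, each is a nonzero array on $K'$ meeting the hypothesis with parameter $s-1$, so the induction hypothesis at $|K'| = |K|-1$ gives $\|u^{(a)}\|_{1} \ge 2^{s-1}$; summing over the two (or more) nonzero slices yields $\|u\|_{1} \ge 2 \cdot 2^{s-1} = 2^{s}$. If exactly one slice $u^{(a_{0})}$ is nonzero, then $u = u^{(a_{0})} \otimes e_{a_{0}}$, and because every $L \subseteq K'$ with $|L| \le s-1$ omits $k$ we get $\pi_{L}(u^{(a_{0})}) = \pi_{L}(u) = 0$; thus $u^{(a_{0})}$ is a nonzero array on $K'$ satisfying the hypothesis with the \emph{same} parameter $s$, and the induction hypothesis gives $\|u\|_{1} = \|u^{(a_{0})}\|_{1} \ge 2^{s}$.

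The step needing the most care, and the main obstacle, is this single-nonzero-slice case: there the reduction drops a coordinate while keeping $s$ fixed, so for the induction hypothesis to apply one needs $|K'| \ge s$. This is automatic when $|K| > s$, and when $|K| = s$ the case simply cannot occur, since the surviving constraint $\pi_{K'}(u^{(a_{0})}) = 0$ is the identity on arrays over the $(s-1)$-element set $K'$ and forces $u^{(a_{0})} = 0$, a contradiction. Pinning down this boundary behavior, together with stating the marginal descent at the correct index (size $s-2$ for generic slices, but the stronger size $s-1$ for a lone slice), is the heart of the argument; everything else is routine accounting of $1$-norms of disjointly supported slices.
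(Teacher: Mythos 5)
Your proof is correct, but there is nothing in the paper to compare it against: the paper does not prove this theorem at all, it imports it wholesale from Kahle's \emph{Neighborliness of marginal polytopes} \cite{Kahle2010} and uses it as a black box in the proof of Theorem \ref{thm:submain}. So what you have produced is a self-contained replacement for the citation, and it holds up. The reduction to the weaker hypothesis (only the marginals $\pi_{L}(u)=0$ for $\#L \leq s-1$, discarding the larger faces of $\Gamma$) is legitimate for a lower bound, and the induction on $\#K$ is sound: the $s=1$ base (total sum zero forces a positive and a negative entry, hence $\|u\|_{1}\geq 2$), the slicing identity $\|u\|_{1}=\sum_{a}\|u^{(a)}\|_{1}$, and the marginal descent $\pi_{L'\cup\{k\}}(u)_{(y,a)}=\pi_{L'}(u^{(a)})_{y}$ are all correct. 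You also correctly identified the one genuinely delicate point, the single-nonzero-slice case, where the coordinate set shrinks but the parameter $s$ does not; your observation that this case is vacuous at the boundary $\#K=s$ (because $\pi_{K'}$ is then among the vanishing marginals and kills the lone slice) is exactly what makes the induction close. One could add that the bound is sharp, which shows your case analysis mirrors the extremal structure: the array $u=\bigotimes_{i=1}^{s}\left(e_{1}-e_{2}\right)\otimes\bigotimes_{k \notin S} e_{1}$ has every marginal of order at most $s-1$ equal to zero and $\|u\|_{1}=2^{s}$, and under your slicing it splits at each coordinate of $S$ into exactly two nonzero slices, each extremal for parameter $s-1$. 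In short: the paper buys brevity by citation; your argument costs a page and buys the reader a proof.
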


\begin{proof}[Proof that $RD(r)  \leq 3 ( \lfloor \log_{2}(r) \rfloor + 1)$]

Fix $r$, and suppose that $D(r) > g(r) :=  3 ( \lfloor \log_{2}(r) \rfloor + 1)$.    Then there exists two unordered lists of rooted binary trees $S_{1}, S_{2} \in \calt_{[n], r}$ for $n >  g(r)$ such that for all $k \leq g(r)$ and  $K \in { [n] \choose k}$, we have $S_{1}|_{K} = S_{2}|_{K}$.  

Let $\Gamma_{r} $ be the simplicial complex with ground set ${[n] \choose 3}$ such that a set $\{K_{1}, \ldots, K_{m}\}$ forms a face of $\Gamma_{r}$ if and only if 
$$
\# ( K_{1} \cup  \cdots \cup K_{m})  \leq g(r).
$$
Note that this implies that the size of the smallest $S \notin \Gamma_{r}$ has $\lfloor \log_{2}(r) \rfloor + 2$ elements (obtained by taking that many disjoint triplets).

The fact that $S_{1}|_{K} = S_{2}|_{K}$ for all $K \in { [n] \choose k}$ with $k \leq g(r)$ implies that 
$\pi_{\Gamma_{r}}(u_{S_{1}}) = \pi_{\Gamma_{r}}(u_{S_{2}})$.  Indeed, if $L = \{K_{1}, \ldots, K_{m}\}$, then $\pi_{L}(e_{T})$ is a table with a single nonzero entry which records which of the rooted triplets on $K_{1}, \ldots, K_{m}$, that tree $T$ has.  Thus, $\pi_{L}(u_{S_{i}})$ is a table which records which combinations of rooted triplets on $K_{1}, \ldots, K_{m}$ appear in the trees in $S_{i}$.  If $\# (K_{1} \cup \cdots \cup K_{m}) \leq g(r)$, this information can be read off from
$S_{1}|_{K_{1} \cup \cdots \cup K_{m}} = S_{2}|_{K_{1} \cup \cdots \cup K_{m}}$, and must be the same for both $S_{1}$ and $S_{2}$.

Since $\pi_{\Gamma_{r}}(u_{S_{1}}) = \pi_{\Gamma_{r}}(u_{S_{2}})$, we see that $u_{S_{1}} - u_{S_{2}} \in \ker_{\zz} \pi_{\Gamma_{r}}$.  Since $S_{1} \neq S_{2}$, $v = u_{S_{1}} - u_{S_{2}} \neq 0$.  By Theorem \ref{thm:kahle}, we have $\|v \|_{1} \geq 2^{\lfloor \log_{2}(r) \rfloor + 2} > 2r$.  On the other hand each $u_{S_{i}}$ has one norm $r$, so $\|v \|_{1} \leq 2r$.  This is a contradiction.
\end{proof}

\begin{rmk}
Note that the same argument for the upper bound would work even if our trees were not binary, by using vector spaces of dimension $4^{[n] \choose 3}$, since arbitrary rooted trees are determined by their rooted triplets (of which there are four possibilities).
\end{rmk}

The lower bound $RD(r)  \geq 3 ( \lfloor \log_{2}(r) \rfloor + 1),$ can be deduced from an elegant construction of Humphries (\cite{Humphries2008} stated for unrooted trees), which we repeat here for completeness.

\begin{proof}[Proof that $RD(r)  \geq 3 ( \lfloor \log_{2}(r) \rfloor + 1)$]
Suppose first that $r = 2^{k-1}$.  Let $T$ be a fixed, but otherwise arbitrary rooted leaf labeled binary tree with
$k$ leaves, labeled by $[k]$.  We will construct sets of trees on $3k$ leaves which
prove the lower bound.  Now let $X$ be the leaf label set  $X =  \{a_{1}, b_{1}, c_{1}$, $\ldots,$
$a_{k}, b_{k}, c_{k}\}$.  On each triple we use two rooted trees $a_{i}| b_{i}c_{i}$
and $b_{i} | a_{i}c_{i}$, denoted $t^{i}_{0}$ and $t^{i}_{1}$ respectively.  Given a
list of trees $t_{\epsilon} = (t^{1}_{\epsilon_{1}}, \ldots, t^{k}_{\epsilon_{k}})$ with $\epsilon \in \{0,1\}^{k}$,  form the tree with $3k$ leaves $T_{\epsilon}$  by identifying the root on tree $t^{i}_{\epsilon_{i}}$ with the label $i$ on the leaf of $T$.

Now let 
\begin{eqnarray*}
S_{k,odd}  & = & \{ T_{\epsilon} : \epsilon \in \{0,1\}^{k}, \quad \sum_{i} \epsilon_{i} \equiv 1 \mod 2 \}   \\
S_{k,even}  & = & \{ T_{\epsilon} : \epsilon \in \{0,1\}^{k}, \quad \sum_{i} \epsilon_{i} \equiv 0 \mod 2 \}. 
\end{eqnarray*}  
For any subset $K \subseteq X$, with $\#K = 3k-1$, we have $S_{k,odd} |_{K}  = 
S_{k,even} |_{K}$.  Indeed, this $K$ omits one vertex, say $a_{k}$ so that both triples $a_{k}|b_{k}c_{k}$ and $b_{k} |a_{k}c_{k}$ collapse to an identical cherry on $b_{k}$ and $c_{k}$ in all trees.  Thus, the trees in both $S_{k,odd} |_{K}$ and 
$S_{k,even} |_{K}$ are determined by all vectors $\epsilon \in \{0,1\}^{k-1}$.
Note that $\# S_{k,odd} = \#S_{k,even} = 2^{k-1}$.  This implies that $RD(r) \geq 3k = 3 (\log_{2}(r) + 1).$  

If $ 2^{k-1} \leq r < 2^{k}$, we let $T'$ be an arbitrary tree with $3k$ leaves on the label set $X$, and let $S_{1}$ and $S_{2}$ be the multisets  $S_{1} = S_{k,odd} \cup \{T', \ldots, T' \}$ and  $S_{2} = S_{k,even} \cup \{T', \ldots, T' \}$, where we union with $r - 2^{k-1}$ copies of $T'$.  Then by the preceding argument $S_{1}|_{K} = S_{2}|_{K}$ for all subsets $K \subseteq X$ with $\#K = 3k-1$.
\end{proof}

\section*{Acknowledgement}
\label{sec:acknowledgement}
  Seth Sullivant was partially supported by the David and Lucille Packard Foundation and the US National Science Foundation (DMS 0954865).


\end{document}